\documentclass[12pt,a4paper]{article}

\usepackage[T1]{fontenc}
\usepackage[utf8]{inputenc}

\usepackage{textcomp}
\usepackage[official]{eurosym}
\usepackage{amssymb,mathrsfs}
\usepackage{amsthm}
\usepackage{mathtools}
\usepackage{latexsym}

\newtheorem{theorem}{Theorem}[section]
\newtheorem{lemma}[theorem]{Lemma}

\newtheorem{corollary}[theorem]{Corollary}
\newtheorem{remark}[theorem]{Remark}

\newtheorem{definition}[theorem]{Definition}

\newcommand{\2}{NONEMPTY}
\newcommand{\1}{EMPTY}
\newcommand{\kare}{$\square$}

\begin{document}
\title{An Extension of Galvin’s Theorem Related to the Banach-Mazur Game}
\author{Servet Soyarslan$^{(a)}$\footnote{Corresponding author. \newline E-mail addresses: servet.soyarslan@gmail.com (S. Soyarslan),  osul@metu.edu.tr(S. \"Onal).} , S\"uleyman \"Onal$^{(b)}$ }

\maketitle	

{\scriptsize a. Independent researcher, Ankara, Türkiye.
	
	b. Middle East Technical University, Department of Mathematics, 06531 Ankara, Türkiye.} 
	\maketitle

\begin{abstract}

Let $X$ be a topological space. Galvin's theorem on the Banach-Mazur game states that if $X$ has a $\pi$-base $\mathcal{B}$ satisfying the following condition:  

\textit{For any $U \in \mathcal{B}$, there exists a disjoint family $\mathcal{A}_U$ of nonempty open subsets of $U$ such that}  
\[
|\mathcal{A}_U| \geq |\{ B \in \mathcal{B} : U \subseteq B \}|,
\]  

then, in the Banach-Mazur game on $X$, the second player, denoted NONEMPTY, has a  2-tactic if \2 has a winning strategy in this game.  
In this paper, we extend the disjointness condition of this family by replacing it with the  condition of being open finite family.
With this extension of the family $\mathcal{A}_U$, we showed that, the theorem is also true.

\end{abstract}

{\scriptsize \textbf{Keywords}: Topological games, Banach-Mazur game, 2-tactic, winning strategy,  Open finite, Noetherian.
}
	
	{\scriptsize\textbf{MSC:} Primary 54D70, 54D99  and 91A44}

\section{Introduction} 
There is a topological space in which \2 has a winning strategy in the Banach-Mazur game on this space, but there is no 1-tactic \cite{debs1985strategies}. In this case, it is impossible for \2 to transition from a winning strategy to a 1-tactic in this game. However, what can be said about the other tactics?
Galvin's theorem is an important result that shows the existence of \2's a 2-tactic  when \2 has a winning strategy in the Banach-Mazur game. Moreover, a set-theoretic statement related to this theorem has been formulated and studied (\cite{brian2021telgarsky}, \cite{brian2022independence}). For the proof of Galvin's theorem, see Theorem 2.4 in \cite{brian2021telgarsky}. It was shown in \cite{soyarslanNoeth} that Theorem 2.4 in \cite{brian2021telgarsky} and the paragraph at the end of the theorem, where the authors mention that the hypothesis of this theorem could be slightly weakened, are equivalent to Galvin's theorem.

The disjointness of the family $\mathcal{A}_U$ holds a crucial role in the proof of Galvin's theorem. Specifically, for the construction of the desired 2-tactic, given two open sets, the disjointness ensures that there is a unique element of $\mathcal{A}_U$ that is contained as a subset of one and contains the other as a subset. This unique element encodes the history of the play up to the current move. Consequently, the winning strategy is applied through this element using the encoded history of the play. In our main theorem (Theorem \ref{Teorem winning ten 2-tactige ana sonuc open-finite}
), we extended the notion of disjointness in this family by replacing it with the condition of being an open-finite family. However, this extension is far from straightforward, as the open-finite family does not provide a unique element capable of encoding the entire history of the play. Our result overcomes this difficulty and establishes the desired extension to open-finite families. Our main theorem not only serves as an extension of Galvin's theorem but also generalizes Theorem 3.8 in \cite{soyarslanNoeth}, which includes Galvin's theorem. In addition to the extended disjointness mentioned here, our main theorem employs the concepts of Noetherian table and purification of a $\pi$-base, which will be defined below. We believe that these concepts have the potential to possibly further extend Galvin's theorem.

   \vspace{2mm}\textbf{Notations and Terminology}\vspace{2mm}
   
   The collection of nonempty elements in the topology of a space $X$ is denoted by $\tau^*(X)$. For any $\mathcal{A} \subseteq \tau^*(X)$, $pur(\mathcal{A})=\{U \in \mathcal{A}.$ every nonempty open subset $W$ of $U$ possesses a cellular family with cardinality $\omega\}$. A base or $\pi$-base for a topological space $X$ in this work does not include the empty set.   If $A$ is a subset of a space $X$, we use $\overline{A}$ to denote the closure of $A$ in $X$. For any set $A$, we use $\mathscr{P}(A)$ and $|A|$ to represent the power set and cardinality of $A$, respectively. If a set $A$ is written as $A = \{a_i: i \in \alpha\}$, where $\alpha$ is a cardinal or ordinal number, we always assume that this correspondence is one-to-one, i.e., $a_i \neq a_j$ if $i \neq j$.

For a topological space, unless stated otherwise, we do not assume any separation axioms.

\vspace{2mm}\textbf{The Banach-Mazur Game and Strategies for \2}\vspace{2mm}

Consider a topological space $X$ and two players, denoted \1 and \2. In round 1, \1 selects $U_0 \in \tau^*(X)$, and \2 chooses a nonempty open subset $V_0 \subseteq U_0$. In subsequent rounds, \1 picks $U_n \subseteq V_{n-1}$, and \2 responds with $V_n \subseteq U_n$. The sequence $(U_0, V_0, U_1, V_1, \ldots)$ forms a play. If $\bigcap_{i \in \omega} U_i = \emptyset$, \1 wins, otherwise \2 wins. The collection of all such plays is the \textit{Banach-Mazur game on a space $X$}, $BM(X)$.

A finite sequence $(U_0, V_0, \ldots, U_n, V_n)$ of legal moves is called a \textit{play part}, and a sequence ending with $(U_n)$ is an \textit{added play part}.

A \textit{winning strategy} for \2 is a function $\sigma$ that guarantees \2’s victory, defining their moves at each round. The domain of $\sigma$ consists of added play parts, and its range is the possible moves for \2. If \2 follows $\sigma$, the resulting play is: $$(U_0, V_0, U_1, V_1, \ldots)$$ where $\sigma((U_0, V_0, \ldots, U_i)) = V_i$ for all $i$, and $\bigcap_{i \in \omega} U_i \neq \emptyset$.

A $k$-\textit{tactic} for \2 is a function that guarantees a win based on the previous $k$ moves of \1. A 1-tactic is called a \textit{stationary strategy}. (We use "$k$-\textit{tactic}" to mean "winning $k$-\textit{tactic}" for brevity and because we are not concerned with strategies that do not guarantee a win.)

 \vspace{2mm}\textbf{Noetherian Families and Family Purification}

\begin{definition}\label{o-f-f}
Let $X$ be space and $\mathcal{A}$ be a family of nonempty subsets of $X$. It is said that $\mathcal{A}$ is an open-finite family if $|\{A\in \mathcal{A}: V\subseteq A\}|<\omega$  for any $V\in \tau^*(X)$.
\end{definition}

\begin{definition}\label{Noetherian} A family  $\mathcal{A}\neq \emptyset$ is called \textbf{Noetherian} if $\emptyset\notin \mathcal{A}$ and every strictly increasing sequence  $A_1\subsetneqq A_2 \subsetneqq A_3\subsetneqq \ldots$ of elements of $\mathcal{A}$ is finite.
	
	It is easy to check that $\mathcal{A}$ is Noetherian iff for all nonempty $\mathcal{B}\subseteq \mathcal{A}$ and for all $B\in \mathcal{B}$ there exists a $\subseteq$-maximal element $B^*$ of $\mathcal{B}$ such that $B\subseteq B^*$.   \begin{flushright}
		\kare
	\end{flushright}
\end{definition}

\begin{definition} \label{Definition Rank of Noetherian families} \cite{onal2020strategies}
	Let  $\mathcal{A}$ be a Noetherian family. We take any ordinal $i$ and  define $\mathcal{A}_i$ as the family  of $\subseteq$-maximal elements of $\mathcal{A}-\bigcup_{j<i} \mathcal{A}_j$ i.e., $\mathcal{A}_i=\{A\in \mathcal{A}-\bigcup_{j<i} \mathcal{A}_j : \forall B\in \mathcal{A}-\bigcup_{j<i} \mathcal{A}_j(A\subseteq B\rightarrow B=A) \}$, where $\mathcal{A}_j$ is already defined  for all $j<i$ in the same way.  
	Say the minimum element of the class   $\{\alpha:\mathcal{A}=\bigcup_{i<\alpha} \mathcal{A}_i\}$ of ordinals is $\alpha^*$ and so,  we have $\mathcal{A}=\bigcup_{i<\alpha^*}\mathcal{A}_i$. (Note that the family $\{\mathcal{A}_i:i<\alpha^*\}$ is disjoint and  $\mathcal{A}_i\neq \emptyset$ for all $i<\alpha^*$) Then, we define the following.
	
	1) The ordinal $\alpha^*$ is called \textbf{rank of $\mathcal{A}$} and denoted by \textbf{rank$(\mathcal{A})$}.
	
	2) What we mean with \textbf{
		Noetherian union of $\mathcal{A}$} is	$\bigcup_{i<rank(\mathcal{A})}\mathcal{A}_i$.
	
	3) We define a  function, \textbf{ rank function of  $\mathcal{A}$},   $r$ from $\mathcal{A}$ to $rank(\mathcal{A})$ such that $r(A)=i$ iff $A\in  \mathcal{A}_i$ for every $A\in \mathcal{A}$. The ordinal  $r(A)$ is called \textbf{level (or rank) of A in  $\mathcal{A}$}.  \begin{flushright}
		\kare
	\end{flushright}
	
\end{definition}

\begin{definition}\cite{soyarslanNoeth}
	Let  $\mathcal{A}$ be a Noetherian family and let Noetherian union of $\mathcal{A}$ be	$\bigcup_{i<\alpha} \mathcal{A}_i$. What we mean with \textbf{an $\alpha\times \beta$ Noetherian table of $\mathcal{A}$} or shortly \textbf{a Noetherian table of $\mathcal{A}$}, which is denoted by $[\mathcal{A}]$, is the following. 
	
	\[\begin{bmatrix}
		A^0_0&A^1_0&A^2_0&\dots  \\
		A^0_1&A^1_1&A^2_1&\dots\\
		A^0_2&A^1_2&A^2_2&\dots\\
		\dots&\dots&\dots& \dots \\
		
	\end{bmatrix}\]
	
	There are $\alpha$, which is rank of $\mathcal{A}$, many rows in this Noetherian table and for each $i\in \alpha$ there  exists an ordinal $\theta_i$ such that $\mathcal{A}_i=\{A^j_i\in \mathcal{A}_i:j \in\theta_i\}$.  The  $i$-th row is the tuple $(A^0_i,A^1_i,A^2_i,\ldots)$ consisting of all elements of   $\mathcal{A}_i$.  $\beta=\min\{\xi:\xi$ is an ordinal and $\theta_i\leq  \xi$ for all $i\in \alpha \}$.

	For any $A^j_i$ in this Noetherian table, note that    $r(A^j_i)=i$, the ordinal $j$ is called  \textbf{ order of $A^j_i$} in $\mathcal{A}_i$ of $[\mathcal{A}]$ and it is written: $o(A^j_i)=j$.\begin{flushright}
		\kare
	\end{flushright}

\end{definition} 

	Every topological space has a Noetherian $\pi$-base (Corollary 2.17 in \cite{soyarslanNoeth}). In fact, we have the following result.

	\begin{theorem} (Corollary 2.14 in \cite{soyarslanNoeth})
		Let $X$ be space and $\pi w(X)\leq\kappa$. For any $\pi$-base  $\mathcal{B}$  of  $X$, there exists a Noetherian $\pi$-base $\mathcal{B}'\subseteq \mathcal{B}$ such that

		(i) $rank(\mathcal{B'})\leq \kappa$ and $\mathcal{B'}$ has a $rank(\mathcal{B'})\times \beta$  Noetherian table  where  $\beta\leq \kappa$,

		(ii) $|\{B\in \mathcal{B'}: W\subseteq B \}|<\kappa$ for all $W\in \mathcal{B'}$.	 \begin{flushright}
			\kare
		\end{flushright}
	\end{theorem}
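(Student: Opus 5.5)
We prove the statement by transfinite recursion. Fix a $\pi$-base $\mathcal{B}$ of $X$ with $\pi w(X)\leq\kappa$. Since $|\mathcal{B}|$ may exceed $\kappa$, we first replace $\mathcal{B}$ by a $\pi$-base $\mathcal{C}\subseteq\mathcal{B}$ with $|\mathcal{C}|\leq\kappa$. To obtain it, take a $\pi$-base $\mathcal{D}$ of size $\leq\kappa$. For each $D\in\mathcal{D}$ choose some $B_D\in\mathcal{B}$ with $B_D\subseteq D$, and set $\mathcal{C}=\{B_D: D\in\mathcal{D}\}$. Given a nonempty open set $W$, there is $D\subseteq W$ in $\mathcal{D}$, so $B_D\subseteq W$; hence $\mathcal{C}$ is a $\pi$-base. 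Enumerate $\mathcal{C}=\{C_\xi:\xi<\lambda\}$ with $\lambda\leq\kappa$.

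We then build $\mathcal{B}'$ by recursion on $\xi<\lambda$, deciding at each stage whether to accept $C_\xi$. The acceptance rule is: accept $C_\xi$ if and only if no already accepted $C_\eta$ ($\eta<\xi$) satisfies $C_\xi\subsetneq C_\eta$. (If $C_\xi$ equals an earlier set, the enumeration is injective, so this case does not occur.) The point of the rule is that every accepted set is contained only in sets accepted \emph{after} it. Hence a strictly increasing chain $A_1\subsetneq A_2\subsetneq\cdots$ in $\mathcal{B}'$ has strictly decreasing indices, and so it is finite. This gives the Noetherian property. The same rule gives (ii): for $W=C_\xi\in\mathcal{B}'$, every $B\in\mathcal{B}'$ with $W\subseteq B$ has index $\leq\xi$, so there are at most $|\xi|+1<\kappa$ of them when $\kappa$ is infinite and $\xi<\lambda\leq\kappa$. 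If the indices happened to be bounded, we would instead use a bound by $|\xi|$ itself.

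The main obstacle is ensuring that $\mathcal{B}'$ remains a $\pi$-base after the rejections. Let $W$ be nonempty open. Consider the set $S=\{\xi: C_\xi\subseteq W\}$, which is nonempty. If some $C_\xi$ with $\xi\in S$ is accepted, we are done. Otherwise each such $C_\xi$ is rejected because of an earlier accepted strict superset, and that superset need not lie inside $W$.

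To handle this, we modify the rule so that it only guarantees Noetherianity locally. Enumerate $\mathcal{C}\times\mathcal{D}$ in type $\leq\kappa$. At stage $\xi$, handling a pair with $D\in\mathcal{D}$, we pick some $C\in\mathcal{C}$ with $C\subseteq D$ that has no strict superset among the already accepted sets. Such a $C$ exists provided fewer than $\kappa$ sets have been accepted and shrinking is possible. Concretely, we shrink within $D$: choose $C\subseteq D$ with $C\subseteq D\setminus\bigcup\{\text{accepted }A: A\not\supseteq D\}$ when that set has nonempty interior. When it does not, we pass to any $C\subseteq D$ and accept it together with its supersets, noting that there are at most $|\xi|$ of them.

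We expect this case analysis to be the delicate step. Once it is in place, the Noetherian table in (i) follows from the preceding definitions. Since $|\mathcal{B}'|\leq\kappa$, each level $\mathcal{B}'_i$ has size $\leq\kappa$, so $\beta\leq\kappa$. Finally, because the levels are nonempty and disjoint, $rank(\mathcal{B}')\leq|\mathcal{B}'|\leq\kappa$.
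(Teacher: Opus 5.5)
\textbf{There is a genuine gap: your acceptance rule points the wrong way.} As written, neither the Noetherian property, nor the $\pi$-base property, nor (ii) is established.

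Your rule rejects $C_\xi$ when $C_\xi\subsetneq C_\eta$ for some earlier accepted $C_\eta$. Under this rule, two accepted sets with $C_\eta\subsetneq C_\xi$ must satisfy $\eta<\xi$, since otherwise $C_\eta$ would have been rejected. So along a strictly increasing chain in $\mathcal{B}'$ the indices strictly \emph{increase}. That forces nothing, because strictly increasing sequences of ordinals can be infinite. For the rational intervals of $\mathbb{R}$, one can schedule the enumeration so that every $(-n,n)$ is accepted.

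Likewise, the members of $\mathcal{B}'$ containing $W=C_\xi$ have indices $\geq\xi$, so nothing bounds their number by $|\xi|$. Your inferences ``strictly decreasing indices'' and ``index $\leq\xi$'' are consequences of the opposite rule.

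The $\pi$-base failure you then notice is real for your rule: if $C_0=X$, nothing else is ever accepted. However, the repair in your last two paragraphs is not an argument:
\begin{itemize}
\item A $C\subseteq D$ with no strict superset among the accepted sets does not exist once some accepted set strictly contains $D$.
\item ``Accept it together with its supersets'' can reintroduce infinite increasing chains.
\item No proof of Noetherianity or of (ii) is given for the modified construction.
\end{itemize}

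\textbf{The fix is to reverse the rule.} Accept $C_\xi$ iff no accepted $C_\eta$ with $\eta<\xi$ satisfies $C_\eta\subsetneq C_\xi$. Then:
\begin{itemize}
\item[(a)] If $C_\eta\subsetneq C_\xi$ are both accepted, then $\eta>\xi$. So strictly increasing chains in $\mathcal{B}'$ have strictly decreasing indices and are finite.
\item[(b)] $\{B\in\mathcal{B}': C_\xi\subseteq B\}\subseteq\{C_\eta:\eta\leq\xi\}$, which has size $\leq|\xi|+1<\kappa$ for infinite $\kappa$.
\item[(c)] For a nonempty open $W$, the least $\xi$ with $C_\xi\subseteq W$ is accepted. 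A rejecting witness $C_\eta\subsetneq C_\xi$ with $\eta<\xi$ would also lie in $W$, contradicting minimality.
\end{itemize}
So your ``main obstacle'' disappears. Your reduction to $|\mathcal{C}|\leq\kappa$ is the only preparation needed.

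\textbf{Your argument for (i) also needs a correction.} Disjoint nonempty levels give only $|rank(\mathcal{B}')|\leq\kappa$, not $rank(\mathcal{B}')\leq\kappa$ as ordinals; a countable Noetherian family can have rank $\omega+1$. Use (ii) instead:
\begin{itemize}
\item From the level-by-level definition, $r(A)=\sup\{r(B)+1: A\subsetneq B\in\mathcal{B}'\}$.
\item By induction, every ordinal $\leq r(A)$ is the rank of some member of $\{B\in\mathcal{B}': A\subseteq B\}$.
\item Hence $|r(A)|<\kappa$, so $r(A)<\kappa$ and $rank(\mathcal{B}')\leq\kappa$.
\end{itemize}
The bound $\beta\leq\kappa$ follows by enumerating each level $\mathcal{B}'_i$ in order type $|\mathcal{B}'_i|\leq\kappa$.
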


\begin{definition} \cite{soyarslanNoeth}  Let $X$ be a space and $\mathcal{A}\subseteq \tau^*(X)$. Then, we define and call \textbf{the purification of $\mathcal{A}$}, $pur( \mathcal{A})$, as follows:  $pur(\mathcal{A})=\{U\in \mathcal{A}: $ if $W \in \tau^*(U)$, then $W$ has a cellular family of cardinality $\omega\}$.
	
	Note that if $U\in pur(\mathcal{A})$, then $U$ has no isolated points.
	\begin{flushright}\kare	
		
	\end{flushright}
\end{definition}

\begin{remark}\label{r:g}\cite{soyarslanNoeth}
	Let $X$  be a  space. Suppose \2 has a winning strategy in $BM(X)$ and suppose we want to prove the existence of \2's 2-tactic (even k-tactic or k-Markov strategy). Let $U_i$ be a move of \1 in any arbitrary play of $BM(X)$. Then,  without loss of generality, we can suppose that $U_i\in pur(\tau^*(X))$. Otherwise  \2 has a 1-tactic (so, 2-tactic,  k-tactic or k-Markov) after the move $U_i$.

	In addition, it suffices to assume that $U_0\in pur(\tau^*(X))$ where $U_0$ is a first move of \1. This is because,  if $O$ is any move of \1 or \2 in this play after the move $U_0$, then $O\in pg(\tau^*(X))$.  Consequently,  if we assume $U_0\in pur(\tau^*(X))$, both \1 and \2  will continue playing from $pur(\tau^*(X))$.
\end{remark}

\begin{corollary}\label{makale}(Corollary 3.7 in \cite{soyarslanNoeth})
		Let $X$  be a  space. Suppose \2 has a winning strategy in $BM(X)$. For a $\pi$-base $\mathcal{B}$ of $X$ if $pur(\mathcal{B})=\emptyset$, then \2 has a 1-tactic (so, 2-tactic) in $BM(X)$. \begin{flushright}
		\kare
	\end{flushright}
\end{corollary}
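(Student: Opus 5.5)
The plan is to show that when $pur(\mathcal{B})=\emptyset$, every nonempty open set contains an \emph{irreducible} nonempty open set $V$, that is, one in which any two nonempty open subsets intersect. I will then show that, because \2 has a winning strategy, every decreasing sequence of nonempty open subsets of such a $V$ has nonempty intersection. The 1-tactic is simply ``answer $U$ with a fixed irreducible open subset of $U$''.

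\emph{Step 1 (irreducible subsets).} Let $U\in\tau^*(X)$ and pick $B\in\mathcal{B}$ with $B\subseteq U$. Since $B\notin pur(\mathcal{B})$, there is $W\in\tau^*(B)$ with no cellular family of cardinality $\omega$. I claim $W$ contains an irreducible $V\in\tau^*(X)$. Suppose not. Then $W$ itself is not irreducible, so it contains disjoint nonempty open sets $A_1,W_1$. Again $W_1$ is not irreducible, so it contains disjoint nonempty open sets $A_2,W_2$, and so on. The sets $A_1,A_2,\ldots$ are pairwise disjoint, since $A_{n+1}\subseteq W_n$ and $W_n\cap A_n=\emptyset$. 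Hence $\{A_n:n\in\omega\}$ is an infinite cellular family in $W$, a contradiction. Using choice, fix for every $U\in\tau^*(X)$ such an irreducible $V_U\subseteq U$.

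\emph{Step 2 (completeness inside irreducible sets).} Let $\sigma$ be a winning strategy for \2. Let $V$ be irreducible and let $W_0\supseteq W_1\supseteq\cdots$ be nonempty open subsets of $V$. Suppose toward a contradiction that $\bigcap_n W_n=\emptyset$. Let \1 play against $\sigma$ as follows: first $U_0=W_0$, and then $U_{n}=W_{n}\cap\sigma((U_0,\ldots,U_{n-1}))$. Each $U_n$ is a legal move. Indeed, $\sigma((U_0,\ldots,U_{n-1}))$ is a nonempty open subset of $U_{n-1}\subseteq V$, and $W_n$ is a nonempty open subset of $V$, so by irreducibility of $V$ their intersection is nonempty; it is contained in \2's last move. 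Then $\bigcap_n U_n\subseteq\bigcap_n W_n=\emptyset$, so \1 wins against $\sigma$, a contradiction.

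\emph{Step 3 (the 1-tactic).} Define $t(U)=V_U$. In any play where \2 follows $t$, every move from round 1 on is a nonempty open subset of $V_{U_0}$, and the moves form a decreasing sequence. By Step 2 the intersection of \1's moves $U_1\supseteq U_2\supseteq\cdots$ is nonempty. Since $U_0\supseteq U_1$, we get $\bigcap_i U_i\neq\emptyset$, so $t$ is a winning 1-tactic, and hence also a 2-tactic.

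The only delicate point is Step 2. There the winning strategy is genuinely needed: for example, an infinite set with the cofinite topology is irreducible but fails the conclusion. The trick of intersecting \1's intended moves with \2's replies works precisely because of irreducibility.
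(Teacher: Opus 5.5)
Your proof is correct. The paper gives no proof of this corollary; it imports the result from \cite{soyarslanNoeth}. What it does record is the mechanism behind it, in Remark~\ref{r:g}: once a move of EMPTY lies outside $pur(\tau^*(X))$, NONEMPTY has a 1-tactic from that point on. Your three steps establish exactly this mechanism.

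Your Step 1, from the set $W$ onward, applies verbatim to any $U\notin pur(\tau^*(X))$, so your argument also gives the local statement used in that remark. The two key ingredients are both sound:
\begin{itemize}
\item the passage from ``no infinite cellular family'' to an irreducible open subset, via the nested disjoint pairs $A_n, W_n$;
\item the trick of intersecting EMPTY's intended moves with $\sigma$'s replies, which stays legal precisely because of irreducibility.
\end{itemize}
Together they make the result self-contained, without relying on the Noetherian machinery of the cited paper.

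One small correction concerns only your closing side remark, not the proof. The cofinite topology is a counterexample to the conclusion of Step~2 only on a countably infinite set. On an uncountable set, every countable intersection of cofinite sets is co-countable and hence nonempty, so the conclusion of Step~2 holds there.
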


\begin{definition}\label{r-d} Let $\mathcal{B}$ be a Noetherian $\pi$-base of a space $X$ with $pur(\mathcal{B})\neq \emptyset$ and $[pur(\mathcal{B})]$  a $\alpha \times \beta$ Noetherian table of $pur(\mathcal{B})$.  (We can give the same definiton where $[\mathcal{B}]$ is
	 an $\alpha \times \beta$ Noetherian table of $\mathcal{B}$.)

{\bf (a) Definition of $\widehat{U}$:} Let $U\in pur(\tau^*(X))$. Set $h(U)=\min \{o(B)\in \beta: B\in pur(\mathcal{B}), B\subseteq U\}$  and $s(U)= \min \{s\in \alpha: B^{h{(U)}}_s\in [pur(\mathcal{B})], B^{h{(U)}}_s\subseteq U\}$. 
Define  $\widehat{U}=B^{h{(U)}}_{s(U)}\in [pur(\mathcal{B})]$.

{\bf (b) Right-down version of $BM(X)$:} We modify the game $BM(X)$ as follows. Both \1 and \2 play  from $pur(\mathcal{B})$ and if $U$ is a move of the players \1 or \2, then $r(B)\geq r(U)$ and $o(B)\geq o(U)$ for any $B\in \{B\in pur(\mathcal{B}): B\subseteq U\}$.  We call the modified game  {\bf right-down version of $BM(X)$ on the Noetherian table $[pur(\mathcal{B})]$}. If there is no confusing,  shortly, we call it {\bf right-down version of $BM(X)$}.
	
\begin{flushright}
	\kare
\end{flushright} 

\end{definition}

\begin{remark}
Let $\mathcal{B}$ be a Noetherian $\pi$-base of a space $X$ with $pur(\mathcal{B})\neq \emptyset$. Note that 
 for any given $U\in pur(\mathcal{B})$, by using Definiton \ref{r-d} (a), we always  get $\widehat{U}\in pur(\mathcal{B})$.  Thus, \1 and \2 can use Definiton \ref{r-d} (a) for all their moves and so, they can play in the  right-down version of $BM(X)$. Even, the sequence $(\widehat{U}, \widehat{U}, \widehat{U}, \ldots)$ is a play  in the  right-down version of $BM(X)$.  Therefore, the game  right-down version of $BM(X)$ (on the Noetherian table $[\mathcal{B}]$ or $[pur(\mathcal{B})]$) exists if $pur(\mathcal{B})\neq \emptyset$.
 
 \begin{flushright}
 	\kare
 \end{flushright} 
\end{remark}

The following lemma can also be stated for the Noetherian table $[\mathcal{B}]$.

\begin{lemma} \label{Lemma1}
	
Let $\mathcal{B}$ be a $\pi$ base  with $pur(\mathcal{B})\neq \emptyset$ and $[pur(\mathcal{B})]$ a Noetherian table of a space $X$. 	Then, \2 has a winning strategy (k-tactic) in $BM(X)$  if and
	only if \2 has a winning strategy (k-tactic) in the right-down  version of $BM(X)$ on the Noetherian table $[pur(\mathcal{B})]$.
\end{lemma}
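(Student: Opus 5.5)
We prove both directions by translating plays between the two games, using the map $U\mapsto\widehat{U}$ of Definition \ref{r-d}(a) as the bridge. By Remark \ref{r:g}, we may assume every move of \1 in $BM(X)$ lies in $pur(\tau^*(X))$. Otherwise \2 already has a 1-tactic from that point on, and any $k$-tactic can be patched to use it. In particular $\widehat{U}$ is defined for every relevant move $U$, and $\widehat{U}\in pur(\mathcal{B})$ with $\widehat{U}\subseteq U$.

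The first step is to check that $\widehat{U}$ is a legal move of the right-down game. Suppose $B\in pur(\mathcal{B})$ and $B\subseteq \widehat{U}$. Then $B\subseteq U$, so $o(B)\geq h(U)=o(\widehat U)$ by minimality of $h$. For the rank: if $B\subsetneq \widehat U$, then by the construction of the Noetherian levels $r(B)>r(\widehat U)$, since elements of a level are maximal among those not yet removed. If $B=\widehat U$ the inequality is trivial. So $\widehat U$ satisfies the right-down condition.

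\emph{($\Rightarrow$)} Let $\sigma$ be a winning strategy (resp.\ $k$-tactic) for \2 in $BM(X)$. Every right-down play is in particular a $BM(X)$ play. When \1 plays $U_n$ in the right-down game, \2 computes $V=\sigma(\ldots,U_n)$ and answers $\widehat{V}$. This answer lies inside $V\subseteq U_n$ and is legal by the first step. The intersection of the resulting play is contained in the intersection of a $\sigma$-play in which \2's responses were shrunk. Shrinking \2's moves is harmless if we interpret the $\sigma$-play as one where \1's next move $U_{n+1}\subseteq\widehat V\subseteq V$ is legal in $BM(X)$. So the play is $\sigma$-consistent and its intersection is nonempty. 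For a $k$-tactic the response depends only on the last $k$ moves of \1, so it remains a $k$-tactic.

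\emph{($\Leftarrow$)} Let $\rho$ be a winning strategy (resp.\ $k$-tactic) in the right-down game. Given an arbitrary $BM(X)$ move $U_n$, \2 replaces it by $\widehat{U_n}$ and plays $V_n=\rho(\ldots,\widehat{U_{n}})$, computed on the last $k$ hatted moves in the tactic case. Then $V_n\subseteq \widehat{U_n}\subseteq U_n$. The main obstacle is legality of the shadow play $(\widehat{U_0},V_0,\widehat{U_1},\ldots)$: we need $\widehat{U_{n+1}}\subseteq V_n$, but we only know $\widehat{U_{n+1}}\subseteq U_{n+1}\subseteq V_n$. So legality actually holds, because every subset of $U_{n+1}$ is a subset of $V_n$, and $\widehat{U_{n+1}}$ is right-down legal by the first step. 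Hence the shadow play is a $\rho$-play, and $\emptyset\neq\bigcap\widehat{U_n}\subseteq\bigcap U_n$. The delicate point I would double-check is that the right-down legality condition is relative only to the current move and not to the previous one. If the intended definition requires monotonicity along the play, i.e.\ $r$ and $o$ nondecreasing, this follows from the first step applied to $V_n$: $\widehat{U_{n+1}}\subseteq V_n$ forces $o(\widehat{U_{n+1}})\ge o(V_n)$ and $r(\widehat{U_{n+1}})\ge r(V_n)$.
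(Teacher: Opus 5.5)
Your proof is correct and is essentially the paper's: in ($\Rightarrow$) you answer with $\widehat{\sigma(\ldots,U_n)}$ computed along the unhatted $\sigma$-history (or $\widehat{\sigma(U_{n-k+1},\ldots,U_n)}$ for a $k$-tactic), and in ($\Leftarrow$) you feed the hatted moves $\widehat{U_n}$ to the right-down strategy, relying on Remark \ref{r:g} exactly as the paper does; you additionally verify that $\widehat{U}$ is a legal right-down move, which the paper only asserts. One wording fix in ($\Rightarrow$): the right-down play and the shadow $\sigma$-play have literally the same EMPTY moves $U_n$, so their intersections are equal, and that equality is what you need, since ``contained in'' in the direction you wrote would not suffice.
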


\begin{proof}From remark above, we know that right-down version of $BM(X)$ exists because $pur(\mathcal{B})\neq \emptyset.$

($\Rightarrow$) Let $\sigma$ be a winning strategy for \2 in $BM(X)$.  Now, we define a winning strategy $t$ for \2 in the right-down version of $BM(X)$. Define $t(U)=\widehat{\sigma(U)}$ for first round. For further rounds,
suppose $(U_0,V_0, U_1,\ldots,U_n)$ 
be an added play part  in 
the right down version of $BM(X)$. If $(U_0, \sigma(U_0), {U_1}, \ldots,\sigma(\ldots, U_{n-1}),{U_n})$ is an added 
 play part in $BM(X)$ and if  we can  set $V_0=\widehat{W_0}, \ldots, V_{n-1}=\widehat{W_{n-1}}$  where $W_0=\sigma(U_0),\ldots, W_{n-1}=\sigma(U_0, \sigma(U_0), {U_1},\ldots, U_{n-1})$, then we  define
   $$t(U_0,V_0, U_1,\ldots,U_n)=V_n$$ such that  $V_{n}=\widehat{W_{n}}$ where $W_{n}=\sigma(U_0, \sigma(U_0), {U_1},\ldots, U_{n})$.  If  not, we 
    define $$t(U_0,V_0, U_1,\ldots,U_n)=\widehat{U_n}.$$

(For $k$ tactic; let $\sigma$ be a k-tactic for \2 in   $BM(X)$. Let $(U_1, U_2\ldots,U_i)$ be a finite sequence of  last $i$ moves of \1 where $1\leq i\leq k$ in the right-down version of $BM(X)$.  Then set $V_i={\sigma( {U_1},\ldots,{U_i})}$ and define $t( U_1,\ldots,U_i)=\widehat{V_i}$.)

It is straightforward to see that $t$ is a winning strategy (k-tactic) for  \2 in the right-down version of $BM(X)$.

($\Leftarrow$)  Let $\sigma$ be a winning strategy  for \2 in the right-down version of  $BM(X)$. From Remark \ref{r:g},  if $U_0$ is a first move of \1 in any play in $BM(X)$, then we may suppose that $U_0\in pur(\tau^*(X))$. 

Suppose $(U_0,V_0, U_1,\ldots,U_n)$ be an added 
 play part  in $BM(X)$ with $U_0\in pur( \tau^*(X))$. If $(\widehat{U_0}, \sigma(\widehat{U_0}), \widehat{U_1},\sigma(\ldots\widehat{U_1}),\ldots,\widehat{U_n})$ is an added play part in the right-down version of $BM(X)$, then define $$t(U_0,V_0, U_1,\ldots,U_n)={\sigma(\widehat{U_0}, \sigma(\widehat{U_0}), \widehat{U_1},\ldots,\widehat{U_n})}.$$ If  not, define $t(U_0,V_0, U_1,\ldots,U_n)=U_n.$

(Let $\sigma$ be a k-tactic for \2 in the right-down version of  $BM(X)$. Let $(U_1,\ldots,U_i)$ be the  last $i$ moves of \1 where $1\leq i\leq k$ in $BM(X)$. If  $(\widehat{U_1},\ldots,\widehat{U_i})$  is the  last $i$ moves of \1 in the right down of $BM(X)$, then define $t(U_1,\ldots,U_i)=\sigma(\widehat{U_1},\ldots,\widehat{U_i})$. If not define $t(U_1,\ldots,U_i)=U_i.$)
 
It can be seen that $t$ is a winning strategy ($k$-tactic) in $BM(X)$.

\end{proof}

To simplify the notation used in the proof of the main theorem, we introduce the following definitions.

In the following definition, we let  $(A)^1=A$ and $(A)^n=\{(a_1,a_2,\dots ,a_n):\forall 1\leq k\leq n(a_k\in A)\}$. 

\begin{definition}\label{de}
	(1) Let $\gamma^m_l$ and $\theta^m_l$ be some ordinals for the integers $t,l$. Set $$\gamma^m_0\sim \theta^m_0=((\gamma^m_0,\theta^m_0))$$ and set  $$\gamma^m_0\sim \theta^m_k=((\gamma^m_0,\theta^m_0),(\gamma^m_1,\theta^m_1), (\gamma^m_2,\theta^m_2), \ldots, (\gamma^m_k,\theta^m_k))$$ where $k>0$ is an integer. 
	
	Now,  suppose $x=(\gamma^{0}_0\sim \theta^{0}_{k_0},\gamma^{1}_0\sim \theta^{1}_{k_1},\ldots, \gamma^{m}_0\sim \theta^{m}_{k_m})$ and $y=(\nu^{0}_0\sim \mu^{0}_{p_0},\nu^{1}_0\sim \mu^{1}_{p_1},\ldots, \nu^{n}_0\sim \mu^{n}_{p_n})$. Then define the product 
	 of $x$ and $y$ as follows $$x*y=(\gamma^{0}_0\sim \theta^{0}_{k_0},\ldots, \gamma^{m}_0\sim \theta^{m}_{k_n}, \nu^{0}_0\sim \mu^{0}_{p_0},\ldots, \nu^{n}_0\sim \mu^{n}_{p_n}).$$

(2)	Let $i$ and $j$ 
	be infinite  ordinals and $\mathcal{A}$ a family of sets such that $|\mathcal{A}|\geq \max\{|i|,|j|\}$.  What we mean with \textbf{a special numeration of $\mathcal{A}$ for $i$ and $j$},  denoted $\mathbb{S}_{\mathcal{A},i,j}$, is the following. 
	
	Let $$S=\{((\alpha,\beta)):(\alpha,\beta)\in i \times j\}\cup \big(\bigcup_{n\in \omega-\{0,1\}}(i \times j)^n\big).$$
	Then, define $$\mathbb{S}_{\mathcal{A},i,j}=\{(a):a\in S\}\cup(\bigcup_{n\in \omega-\{0,1\}} ({S})^n)$$ 
	
	Note that $|\mathbb{S}_{\mathcal{A},i,j}|\leq |\mathcal{A}|$. So, by fixing a one to one function $f$ from $\mathbb{S}_{\mathcal{A},i,j}$ to 
	$\mathcal{A}$, we can enumerate the subset  $f[\mathbb{S}_{\mathcal{A},i,j}]$ of $\mathcal{A}$ such that  $$f[\mathbb{S}_{\mathcal{A},i,j}]=\{O_x\in\mathcal{A}:x\in \mathbb{S}_{\mathcal{A},i,j}\text{ and } f(x)=O_x\}.$$

 Note that for any $x\in \mathbb{S}_{\mathcal{A},i,j}$, there are some ordinals $\gamma^t_l\in i$ and $\theta^t_l\in j$ such that $$x=(\gamma^{0}_0\sim \theta^{0}_{k_0},\gamma^{1}_0\sim \theta^{1}_{k_1},\ldots, \gamma^{m}_0\sim \theta^{m}_{k_m}).$$

Note that $x*y\in \mathbb{S}_{\mathcal{A},i,j}$ for  any $x,y \in \mathbb{S}_{\mathcal{A},i,j}$ and, so, for any $n\in \omega$, $x_0*x_1*\ldots * x_n\in \mathbb{S}_{\mathcal{A},i,j}$ where $x_0,x_1,\ldots x_n\in \mathbb{S}_{\mathcal{A},i,j}$.

Suppose $x=(\gamma^{0}_0\sim \theta^{0}_{k_0},\gamma^{1}_0\sim \theta^{1}_{k_1},\ldots, \gamma^{m}_0\sim \theta^{m}_{k_m})$, then we call $\gamma^{s}_0\sim \theta^{s}_{k_d}$ is {\bf a part of $x$} where $0\leq s\leq m$ and $0\leq d\leq n$. Suppose $\gamma^{s}_0\sim \theta^{s}_{k_d}$ be a part of $x$ and $\gamma^{s}_0\sim \theta^{s}_{k_d}=((\gamma^{s}_0,\theta^{s}_{0}),(\gamma^{s}_1, \theta^{s}_{1}), \ldots, (\gamma^{s}_{k_n}, \theta^{s}_{k_n}))$, then  we call  $(\gamma^{s}_{i}, \theta^{s}_{i})$ {\bf basic pair of $x$.}

	\begin{flushright}
		\kare
	\end{flushright} 
	
\end{definition}

The following definition can be given with Noetherian table $[\mathcal{B}]$ instead of $[pur(\mathcal{B})]$.

\begin{definition}\label{TAn}
Let $\mathcal{B}$ be a Noetherian $\pi$-base of a space $X$ with $pur(\mathcal{B})\neq \emptyset$ and $[pur(\mathcal{B})]$ be an $\alpha\times \beta$ Noetherian table of $pur(\mathcal{B})$. Suppose \2 has a winning strategy, $\sigma$, in the right-down version of  $BM(X)$.
 
 Let $U\in pur(\mathcal{B})$ with the condition that $r(B)\geq r(U), o(B)\geq o(U)$ for all $B\in \{B\in \mathcal{B}:B\subseteq U\}$ and let  
 and $x=(\gamma^{0}_0\sim \theta^{0}_{k_0},\gamma^{1}_0\sim \theta^{1}_{k_1},\ldots, \gamma^{m}_0\sim \theta^{m}_{k_m})$. If all the followings   can properly happen according mentioned rules and we get the following $W(x,U)$ and $x'$, then we call that  {\bf $x$ produces a $W(x,U)\in pur(\mathcal{B})$ with $\sigma$} and  we call  {\bf $x$ extends to $x'$ from $U$ with $\sigma$.}

It must be that $$B^{\theta^0_0}_{\gamma^0_0} \ldots, B^{\theta^0_{k_0}}_{\gamma^0_{k_0}},	B^{\theta^1_0}_{\gamma^1_0}, \ldots, B^{\theta^1_{k_1}}_{\gamma^1_{k_1}},\ldots,
B^{\theta^m_0}_{\gamma^m_0},\ldots, B^{\theta^m_{k_m}}_{\gamma^m_{k_m}}\in [pur(\mathcal{B})].$$

Then, $$(B^{\theta^0_0}_{\gamma^0_0}, \sigma(B^{\theta^0_0}_{\gamma^0_0}), \ldots, B^{\theta^0_{k_0}}_{\gamma^0_{k_0}}, \sigma(\ldots, B^{\theta^0_{k_0}}_{\gamma^0_{k_0}}), U, \sigma (\ldots,\sigma(\ldots,B^{\theta^0_{k_0}}_{\gamma^0_{k_0}}), U))$$ 
must be a  play part in the right-down version of  $BM(X)$.  After that, when $\sigma (\ldots,\sigma(\ldots,B^{\theta^0_{k_0}}_{\gamma^0_{k_0}}), U)=W_0$,

$$(B^{\theta^1_0}_{\gamma^1_0}, \sigma(B^{\theta^1_1}_{\gamma^1_0}), \ldots, B^{\theta^1_{k_1}}_{\gamma^1_{k_1}}, \sigma(\ldots, B^{\theta^1_{k_1}}_{\gamma^1_{k_1}}),W_0, \sigma(\dots\sigma(\ldots,B^{\theta^1_{k_1}}_{\gamma^1_{k_1}}),W_0))$$
must be a play part in the right-down version of  $BM(X)$.  After that, when $\sigma(\dots\sigma(\ldots,B^{\theta^1_{k_1}}_{\gamma^1_{k_1}}),W_0)=W_1$,

$\ldots$

$\ldots$
	$$(B^{\theta^m_0}_{\gamma^m_0}, \sigma(B^{\theta^m_0}_{\gamma^m_0}), \ldots, B^{\theta^m_{k_m}}_{\gamma^m_{k_m}}, \sigma(\ldots, B^{\theta^m_{k_m}}_{\gamma^m_{k_m}}),W_{m-1}, \sigma(\ldots\sigma(\ldots,B^{\theta^m_{k_m}}_{\gamma^m_{k_m}}),W_{m-1}))$$ must be a play part in the right-down version of  $BM(X)$.  
	
	Lastly,  we set	 $$\sigma(\ldots\sigma(\ldots,B^{\theta^m_{k_m}}_{\gamma^m_{k_m}}),W_{m-1})=W_m=W(x,U).$$ 
Now, say $(\gamma^{0}_{k_0+1},\theta^{0}_{k_0+1})=(r(U),o(U))$ and $(\gamma^{1}_{k_1+1},\theta^{1}_{k_1+1})=(r(W_0),o(W_0))$,  $(\gamma^{2}_{k_2+1},\theta^{2}_{k_2+1})=(r(W_1),o(W_1))$, $\ldots$, $(\gamma^{m}_{k_m+1},\theta^{m}_{k_m+1})=(r(W_{m-1}),o(W_{m-1})).$ After all that, we can set $$x'=(\gamma^{0}_0\sim \theta^{0}_{k_0+1},\gamma^{1}_0\sim \theta^{1}_{k_1+1},\ldots, \gamma^{m}_0\sim \theta^{m}_{k_m+1}).$$

	\begin{flushright}
	\kare
\end{flushright} 

\end{definition}

\vspace{2mm}\textbf{The Main Theorem}\vspace{2mm}

The following theorem, is also true  for Noetherin table $[\mathcal{B}]$ instead of $[pur(\mathcal{B})]$.

\begin{theorem}\label{Teorem winning ten 2-tactige ana sonuc open-finite} 
	Let $X$ be a space and  $\mathcal{B}$ a $\pi$-base of $X$ having the following ($\circledast$)-condition. If \2 has a winning strategy in $BM(X)$, then \2 has a  2-tactic in $BM(X)$.

	($\circledast$) $pur(\mathcal{B})=\emptyset$  or 
	 $pur(\mathcal{B})$ is   Noetherian and has a 
	Noetherian table $[pur(\mathcal{B})]$ such that
	for any $B\in pur(\mathcal{B})$ if  $o(K)\geq o(B)$ for all $K\in \{K\in pur(\mathcal{B}):K\subseteq B\}$, then
		there exists an open-finite family $\mathcal{A}_{B}\subseteq \tau^*(B)$ such that 
	$|\mathcal{A}_{B}|\geq \max\{ |r(B)|,M_B\}$ where Noetherian union of $pur(\mathcal{B})$ is  $\bigcup_{\theta<\alpha}\mathcal{B}_\theta$ and $M_B=\sup\{|\{V\in  \mathcal{B}_\theta:B\subseteq V, o(V)\leq o(B)\}|:\theta\leq r(B)\}$.

\end{theorem}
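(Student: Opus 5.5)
If $pur(\mathcal{B})=\emptyset$, Corollary \ref{makale} already gives a 1-tactic, hence a 2-tactic. So assume $pur(\mathcal{B})$ is Noetherian with a table $[pur(\mathcal{B})]$ as in ($\circledast$). By Lemma \ref{Lemma1} it suffices to build a 2-tactic in the right-down version of $BM(X)$ on $[pur(\mathcal{B})]$, starting from a winning strategy $\sigma$ for \2 there. In this version every move $U$ satisfies $o(K)\geq o(U)$ for all $K\in pur(\mathcal{B})$ with $K\subseteq U$. Hence each move admits an open-finite family $\mathcal{A}_U\subseteq\tau^*(U)$ with $|\mathcal{A}_U|\geq\max\{|r(U)|,M_U\}$. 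Since $U$ has no isolated points and every open subset has an infinite cellular family, I may assume $|\mathcal{A}_U|$ is infinite and at least $|r(U)|+|o(U)|+\omega$. Then I fix a special numeration $\mathbb{S}_{\mathcal{A}_U,i,j}=\{O_x\}$ of $\mathcal{A}_U$ (Definition \ref{de}) with $i,j$ chosen as $\max(r(U)+1,\omega)$ and $\max(o(U)+1,\omega)$. The aim is that every code $x$ recording previous moves by pairs (rank, order) gets its own element $O_x$ of $\mathcal{A}_U$.

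The 2-tactic $t(U',U)$, where $U'$ is \1's previous move and $U$ the current one, works as follows. \2 answers $U$ with a basic set $\widehat{O}$, where $O\subseteq W(x,U)\cap O_{x'}$ in the appropriate sense. Here $x$ is the code of the history carried by the previous \2-move, and $x$ extends to $x'$ from $U$ with $\sigma$ (Definition \ref{TAn}). The history is recovered from the pair $(U',U)$. \2's previous move $V'$ was chosen inside some $O_{y}\in\mathcal{A}_{U'}$, and $U\subseteq V'\subseteq O_y$. So $y$ is determined up to finitely many candidates, namely those $O_y\in\mathcal{A}_{U'}$ with $U\subseteq O_y$; this is where open-finiteness replaces disjointness.

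Instead of choosing one candidate, \2 processes all of them. List the candidates $y_0,\dots,y_m$, each with a part of the form $\gamma\sim\theta$. Form the product $x=y_0*\cdots*y_m$ and feed it through Definition \ref{TAn}. This runs $\sigma$ successively along each candidate history, nesting $U\supseteq W_0\supseteq\cdots\supseteq W_m$. The resulting $W(x,U)$ is thus a legal $\sigma$-response simultaneously for every candidate play, including the true one. The record $x'$ is then coded into the next move by choosing it inside $O_{x'}\cap W(x,U)$. This needs $O_{x'}$ and $W(x,U)$ to meet, so the numeration should be chosen inside $W(x,U)$: apply ($\circledast$) to $\widehat{W(x,U)}$ rather than to $U$. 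The cardinality bound $|\mathcal{A}|\geq\max\{|r|,M\}$ is exactly what makes the coding sets $\mathbb{S}$ (sequences of pairs from $r\times o$ with bounded ranks and orders, since the right-down rule forces ranks and orders to be monotone) fit injectively. The term $M_B$ counts the possible earlier moves of the same or lower rank containing $B$ with order at most $o(B)$.

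The main obstacle is showing that the resulting play is won. Since each stage runs $\sigma$ on all finitely many candidates, the true history $(U_0,V_0,\dots)$ is always a subsequence of one part of the growing code. I must show that along the actual play some single part grows infinitely often in a coherent way, yielding an infinite $\sigma$-play $(B_0,\sigma(\cdot),B_1,\dots)$ whose \1-moves contain the actual \1-moves cofinally. The $\sigma$-win then gives $\bigcap U_n\neq\emptyset$. I would first try a K\"onig's lemma argument. The candidate parts form a finitely branching tree, because open-finiteness bounds the number of candidates at each step. Noetherianity of $pur(\mathcal{B})$, together with the monotonicity of $(r,o)$ in the right-down game, should ensure that the true branch is infinite and never dropped. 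Nestedness $W_m\subseteq W_{m-1}\subseteq\dots$ then transfers the $\sigma$-win on that branch to the actual play.
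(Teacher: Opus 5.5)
Your overall architecture matches the paper's: reduce to the right-down game, code candidate histories by finite sequences of rank/order pairs, and process all finitely many candidates by concatenating their codes and running $\sigma$ along nested simulated plays. However, two ingredients are missing, and as written the construction cannot be carried out.

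\textbf{The coding alphabet is too large.} Taking $j=\max(o(U)+1,\omega)$ requires $|\mathcal{A}_U|\ge|o(U)|$, and ($\circledast$) gives no such bound. For $U$ in row $0$ one has $r(U)=0$ and $M_U=1$ whatever $o(U)$ is. On the other hand, inside a $U$ of countable $\pi$-weight every open-finite family is countable. For example, a topological sum of $\omega_2$ Cantor sets with the cylinder $\pi$-base, rows listed copy by copy, satisfies ($\circledast$) and has a legal move $U$ with $o(U)=\omega_1$. The paper never stores absolute orders. Every set $H$ recorded in the code attached to the current move $U$ contains $U$. So by the right-down rule, $H$ lies in the family of members of row $r(H)$ that contain $U$ and have order $\le o(U)$. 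It is recorded as $(r(H),o_U(H))$, where $o_U(H)$ is its index in that family, so $o_U(H)<M_U$. This is exactly the role of $M_U$: the special numeration over $i_U=\max\{\omega,r(U),M_U\}+1$ then injects into $\mathcal{A}_U$, and knowing $U$ one recovers $o(H)$ from $o_U(H)$.

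\textbf{Where you store the code destroys decodability.} This is the more serious gap. Because you run $\sigma$ at the actual move $U$, the code set $O_{x'}\in\mathcal{A}_U$ need not meet $W(x,U)$. Your remedy, drawing the code from $\mathcal{A}_{\widehat{W(x,U)}}$, makes the coding family depend on the unknown history $x$. At the next round \2 sees only $(U,U^{+})$, so it cannot tell which family to search. Moreover, the possible sets $\widehat{W}\supseteq U^{+}$ need not be finitely many. Open-finiteness is a property of one family, so it no longer gives the finite candidate list that your product $y_0*\cdots*y_m$ and the nested chain $W_0\supseteq\cdots\supseteq W_m$ need.

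The missing idea is to reverse the order. First choose the code set $O_{\bar x}\in\mathcal{A}_U$, a family \2 can rebuild from $U$ alone. Then run the nested simulated $\sigma$-plays from $\widehat{O_{\bar x}}$ instead of from $U$, i.e.\ answer with $W(x,\widehat{O_{\bar x}})$ rather than with something inside $W(x,U)$.
\begin{itemize}
\item This is legal, since $\widehat{O_{\bar x}}\subseteq U$.
\item The answer satisfies $W(x,\widehat{O_{\bar x}})\subseteq\widehat{O_{\bar x}}\subseteq O_{\bar x}$, so it carries the code automatically.
\item There is no circularity: $\bar x$ records only the histories before $\widehat{O_{\bar x}}$, and $\widehat{O_{\bar x}}$ is appended when $x$ is extended at the next round.
\end{itemize}
The candidates at $(U,U^{+})$ are then the $\bar x$ with $U^{+}\subseteq W(x,\widehat{O_{\bar x}})$. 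There are finitely many, by open-finiteness of $\mathcal{A}_U$ alone.

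With this, no K\"onig-type argument is needed. The true code is a candidate by construction, so its part is carried into the product and extended at every round. Following it directly yields a $\sigma$-play $H_0,\sigma(H_0),H_1,\dots$ interleaved with the actual play, with $\bigcap_n H_n=\bigcap_n U_n$.
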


\begin{proof}

 If $pur(\mathcal{B})=\emptyset$, from Corollary \ref{makale}, the proof is completed.
	Suppose $pur(\mathcal{B})\neq \emptyset$. Let  $[pur(\mathcal{B})]$ be an  $\alpha\times\beta$ Noetherian table and say Noetherian union of $pur(\mathcal{B})$ is	$\bigcup_{i<\alpha}\mathcal{B}_i$.
	Say any $i$-th row of  $[pur(\mathcal{B})]$ is $(B^0_i,B^1_i, B^2_i, \ldots)$.

To get the  following two definitions take  and fix any  $B\in pur(\mathcal{B})$.
For $\widehat{B}$ see Definiton \ref{r-d} (a).

(1-Definitions of $\mathcal{C}_{B,\theta}, m_{B,\theta}$) Take any  $\theta \leq r(B)$.  Say $|\{V\in  \mathcal{B}_\theta:B\subseteq V, o(V)\leq o(B)\}|=m_{B,\theta}$ and  $\mathcal{C}_{B,\theta}=\{U\in  \mathcal{B}_\theta:B\subseteq U, o(U)\leq o(B)\}$. Then we can rewrite  and fix $\mathcal{C}_{B,\theta}=\{U_l:l \in m_{B,\theta}\}$ with $U_l\neq U_t$ if $l\neq t$.   
(So,
 $M_{B}=\sup\{m_{B,\theta}:\theta \leq r(B)\}$.)

(2-Definition of $\mathcal{O}_{{B}}$) Because $B\in pur(\mathcal{B})$, ${B}$ has a cellular family of cardinality $\geq\omega$. Now, suppose  $o(K)\geq o(B)$ for all $K\in \{K\in pur(\mathcal{B}):K\subseteq B\}$, then by using the  hypothesis, we can fix an open finite family $\mathcal{A}_{B}\subseteq \tau^*(B)$ 
such that $|\mathcal{A}_{{B}}|\geq \max\{ \omega,|r({B})|, M_{{B}}\}$ (Because  $B\in pur(\mathcal{B})$, there exists a disjoint (so, open finite) family  $\mathcal{A}\subseteq \tau^*(B)$ with $|\mathcal{A}|\geq \omega$. Thus, even if $|r({B})|, M_{{B}}<\omega$, we always   assume that $|\mathcal{A}_{{B}}|\geq  \omega$. ). 
Now, set $i_B= \max\{ \omega,r({B}), M_{{B}}\}+1$. ($+1$ means ordinal succoser) 
Let $\mathbb{S}_{\mathcal{A}_{B}, i_B,i_B}$ be a special numeration of $\mathcal{A}_{B}$. 
And let

 $$\mathcal{O}_{B}=\{O_{\bar{x}}\in\mathcal{A}_{B}:\bar{x}\in  \mathbb{S}_{\mathcal{A}_{B},i_B,i_B}\}.$$ 
	 Because of definition of $\mathbb{S}_{\mathcal{A}_{B},i_B,i_B}$,  $O_{\bar{x}}\neq O_{\bar{y}}$ if ${\bar{x}}\neq {\bar{y}}$.

For any $B\in pur(\mathcal{B})$, let us put a well order on $\mathbb{S}_{\mathcal{A}_{B},i_B,i_B}$.

Let us be given $U,V\in pur(\mathcal{B})$ with the condition that $r(U)\leq r(V)$ and $o(U)\leq o(V)$ and $U\supseteq V$. So, $U\in \mathcal{C}_{V,r(U)}=\{U_l:l \in m_{V,r(U)}\}$. Thus, there is exactly one $l_0\in m_{V,r(U)}$ such that $U=U_{l_0}.$
 Now, set $$o_V(U)=l_0.$$ Note that suppose we know $V$, then we have 
 $(r(U),o(U))$ if and only if we have 
 $(r(U),o_V(U))$. This fact will be used throughout the proof without mentioning it.
 
 In this proof, any basic pair of any part of $\bar{x}\in  \mathbb{S}_{\mathcal{A}_{V},i_V,i_V}$   will be in the form of  $(r(U),o_V(U))$ for some $U,V\in [pur(\mathcal{B})]$. By exchanging    each  basic pair $(r(U),o_V(U))$ for $(r(U),o(U))$, we define an $x$ from $\bar{x}$. More formaly, suppose $\bar{x}=\gamma^{0}_0\sim \bar{\theta}^{0}_{k_0},\gamma^{1}_0\sim \bar{\theta}^{1}_{k_1},\ldots, \gamma^{m}_0\sim \bar{\theta}^{m}_{k_m}$. So, we can say that the first part of $\bar{x}$ is  $\gamma^{0}_0\sim \bar{\theta}^{0}_{k_0}=((r(U_0), o_V(U_0)),(r(U_1), o_V(U_1)),\ldots,(r(U_{k_0}), o_V(U_{k_0})))$. 
  Thus,  first part of $x$ is ${\gamma^{0}_0\sim {\theta^{0}_{k_0}}}=((r(U_0), o(U_0)),(r(U_1), o(U_1)),\ldots,(r(U_{k_0}), o(U_{k_0})))$. After applying similar thing for all other parts,
  we get $x=\gamma^{0}_0\sim \theta^{0}_{k_0},\gamma^{1}_0\sim \theta^{1}_{k_1},\ldots, \gamma^{m}_0\sim \theta^{m}_{k_m}$.

 Because \2 has a winning strategy in $BM(X)$, from Lemma \ref{Lemma1}, \2 has a winning strategy  $\sigma$ in the right-down version of  $BM(X)$.

Now, we define a 2-tactic $t$ for \2 in the right-down version of $BM(X)$ as follows. 

For any given $U\in pur(\mathcal{B})$,  define

 $$t(U)=\sigma(U).$$ 
(Note that because the plays will be in the right-down version of $BM(X)$, $r(B)\geq r(U)$ and $o(B)\geq o(U)$ for any $B\in \{B\in pur(\mathcal{B}): B\subseteq U\}$. And the same applies to the following U and V.) 
 
 To define $t(U,V)$ in the right-down version of $BM(X)$, let us be given $U,V\in pur(\mathcal{B})$.

Say $\mathcal{F}_{U,V}=\{\bar{x}\in \mathbb{S}_{\mathcal{A}_{U}, i_U,i_U}:x$ produces a $W(x,\widehat{O_{\bar{x}}})$   with $\sigma$ where  $O_{\bar{x}}\in \mathcal{O}_U$ and 
 $V\subseteq W(x,\widehat{O_{\bar{x}}})\}$. 
 Now, $\{O_{\bar{x}}\in \mathcal{O}_U: V\subseteq O_{\bar{x}},  \bar{x}\in \mathcal{F}_{U,V} \}$ is finite because   $\mathcal{O}_U$ is a subset of open finite family $\mathcal{A}_{U}$. Thus,  $\mathcal{F}_{U,V}$ is finite.  

\textbf{Case 1.} If $\mathcal{F}_{U,V}= \emptyset$, then set $\bar{x}_{U,V}=\big(((r(U),o_V (U)))\big)$. Because $U\in C_{V,r(U)}$, $o_V (U)\leq m_{V,r(U)}\leq M_V<i_V$. So, $o_V(U)<i(V)$. In addition, $r(U)\le i_V$ because $U\supseteq V$ and $i_V>r(V)\geq r(U)$. Thus, $\bar{x}_{U, V}\in \mathbb{S}_{\mathcal{A}_V,i_V,i_V}$. So, there is $\widehat{O}_{\bar{x}_{U,V}}$ and we can get $x_{U,V}$ from $\bar{x}_{U, V}$. Note that if $(U,\sigma(U), \widehat{O}_{\bar{x}_{U,V}})$ is an added play part in the right-down version of $BM(X)$, then $x_{U,V}$ produces $W(x_{U, V},\widehat{O_{\bar{x}_{U,V}}})$ with $\sigma$. Then, 
 define $$t(U,V)=W(x_{U, V},\widehat{O_{\bar{x}_{U,V}}}).$$ 
 If not, i.e., $(U,\sigma(U), \widehat{O}_{\bar{x}_{U,V}})$ is not an added play part
  in the right-down version of $BM(X)$, then define $t(U,V)=V.$

\textbf{Case 2.} If $\mathcal{F}_{U,V}\neq \emptyset$, by using the well order on  $\mathbb{S}_{\mathcal{A}_{U}, i_U, i_U}$, we can enumerate $\mathcal{F}_{U,V}=\{\bar{x}_0,\bar{x}_1,\ldots,\bar{x}_n\}$ such that $\bar{x}_0<\bar{x}_1<\ldots<\bar{x}_n.$
  Thus, for $0\leq i\leq n$, we get $x_i$ from $\bar{x}_i$ 
and  $x_i$ extends to $x'_i$ from $\widehat{O_{\bar{x_i}}}$   with $\sigma$ where  $O_{\bar{x}_i}\in \mathcal{O}_U$.  
Now, if 
$(U,\sigma(U),V)$ 
is an added play part in the right-down version of $BM(X)$, then set  $$x_{U,V}=(((r(U),o(U))))*x'_0*x'_1*\ldots*x'_n.$$ If not, then set
$$x_{U,V}=x'_0*x'_1*\ldots*x'_n.$$
Note that we can get $\bar{x}_{U,V}$ 
from $x_{U,V}.$

\textbf{Claim.} $\bar{x}_{U,V}\in \mathbb{S}_{\mathcal{A}_{V}, i_V,i_V}$ and $x_{U,V}$ produces a 
$W(x_{U,V},\widehat{O_{\bar{x}_{U,V}}})\in pur(\mathcal{B})$ with $\sigma$ where $O_{\bar{x}_{U,V}}\in \mathcal{O}_V\subseteq \mathcal{A}_{V}$. 

\textbf{Proof of the Claim.}
Now, let $(r(H),o(H))$ be a basic part of a ${x'}_i$ and consider $(r(H),o_V (H))$. Then, by using Definition \ref{de} and \ref{TAn},  it is straightforward to see that $r(H)<i_V$ and $o_V(H)<i_V$. Thus, $\bar{x'}_i\in \mathbb{S}_{\mathcal{A}_{V}, i_V,i_V}$. Therefore,   $\bar{x}_{U,V}\in \mathbb{S}_{\mathcal{A}_{V}, i_V,i_V}$.
 Because $\widehat{O_{\bar{x}_{U,V}}}\subseteq V$, from   Definition \ref{de} and \ref{TAn},  $x_{U,V}$ produces a 
$W(x_{U,V},\widehat{O_{\bar{x}_{U,V}}})\in pur(\mathcal{B})$ with $\sigma$ where $O_{\bar{x}_{U,V}}\in \mathcal{O}_V\subseteq \mathcal{A}_{V}$. So, proof of the claim is completed.

Thus, we define $$t(U,V)=W(x_{U,V},\widehat{O_{\bar{x}_{U,V}}}).$$

To see $t$ is a 2-tactic,  we will get an  arbitrary  play  

\begin{equation}\label{2}
U_0,t(U_0),U_1,t(U_0,U_1),U_2,t(U_1,U_2),U_3,\ldots	
\end{equation}in the right-down version of $BM(X)$ where \2 moves  by using  $t$.
After that, we will see that $\bigcap_{i\in \omega}U_i\neq \emptyset$. To see it, as we construct Play (\ref{2}), by using induction we will define a play $(H_0, \sigma(H_0),H_1, \sigma(\ldots,H_1),H_2,\ldots)$ in the right-down version of $BM(X)$ such that $U_0\supseteq H_0\supseteq \sigma(H_0)\supseteq t(U_0) \supseteq U_1\supseteq H_1\supseteq \sigma(\ldots,H_1)\supseteq t(U_{0},U_1)\supseteq\dots$ .

For the first round, let \1 moves $U_0$. Say $H_0=U_0$, then from the definition of $t$, $t(U_0)=\sigma(H_0)$. So, we get a play part $(H_0,\sigma(H_0))$ and $U_0\supseteq H_0\supseteq \sigma(H_0)\supseteq t(U_0).$

For the second round, let \1 moves $U_1$. \2 sees $U_0$ and $U_1$. If  $\mathcal{F}_{U_0,U_1}= \emptyset$, then from the definition of $t$, we get $x_{U_0, U_1}$ from  $\bar{x}_{U_0,U_1}=((r(U_0),o_{U_1}(U_0)))$. Note that   $x_{U_0, U_1}$ produces $W(x_{U_0, U_1},\widehat{O_{\bar{x}_{U_0, U_1}}})=\sigma(U_0,\sigma(U_0),\widehat{O_{\bar{x}_{U_0, U_1}}})$. From the definition of $t$,
$t(U_0,U_1)=W(x_{U_0, U_1},\widehat{O_{\bar{x}_{U_0, U_1}}})$. 
If $\mathcal{F}_{U_0,U_1}\neq \emptyset$, suppose $\mathcal{F}_{U_0,U_1}=\{\bar{x}_0,\bar{x}_1,\ldots,\bar{x}_n\}$ such that $\bar{x}_0<\bar{x}_1<\ldots<\bar{x}_n$.
Because $(U_0,\sigma(U_0),U_1)$ 
is an added
 play part in the right-down version of $BM(X)$, then $x_{U_0,U_1}=((r(U_0),o(U_0)))*x'_0*x'_1*\ldots*x'_n.$ Because  this  $x_{U_0, U_1}$ produces this $W(x_{U_0, U_1},\widehat{O_{\bar{x}_{U_0, U_1}}})$,  from the definition of $t$, \2 makes the move 
$t(U_0,U_1)=W(x_{U_0, U_1},\widehat{O_{\bar{x}_{U_0, U_1}}})$. Thus, for both  cases, i.e., either $\mathcal{F}_{U_0,U_1}= \emptyset$ or $\mathcal{F}_{U_0,U_1}\neq \emptyset$, we  set $H_1=\widehat{O_{\bar{x}_{U_0, U_1}}}$. Thus, by using definition of $t$ and Definition \ref{TAn}, $\sigma(H_0,\sigma(H_0),H_1)=\sigma(U_0,\sigma(U_0), \widehat{O_{\bar{x}_{U_0, U_1}}})\supseteq W(x_{U_0, U_1},\widehat{O_{\bar{x}_{U_0, U_1}}}) =t(U_0,U_1).$
Therefore, we get $U_0\supseteq H_0\supseteq \sigma(H_0)\supseteq t(U_0)\supseteq U_1\supseteq H_1\supseteq \sigma(\ldots,H_1)\supseteq t(U_{0},U_1).$

Note that  either $\mathcal{F}_{U_0,U_1}= \emptyset$ or $\mathcal{F}_{U_0,U_1}\neq \emptyset$, $((r(U_0),o(U_0)))$ is a 
part of $x_{U_0,U_1}$ and $H_0=B^{o(U_0)}_{r(U_0)}\in [pur(\mathcal{B})]$. And $(r(U_0),o(U_0)), (r(\widehat{O_{\bar{x}_{U_0, U_1}}}),o(\widehat{O_{\bar{x}_{U_0, U_1}}}))$ is also a part of $x'_{U_0,U_1}$ with $H_1=\widehat{O_{\bar{x}_{U_0, U_1}}}$. So, we have the hypothesis of the following induction for $n=1$.

We use induction. 

For an $n\geq1$, as  an induction hypothesis suppose that for (n+1)-th round \1 makes   $U_n$ and we have the following.

 ($1_n$) \2 makes the move  $t(U_{n-1},U_n) =W(x_{U_{n-1},U_n},\widehat{O_{\bar{x}_{U_{n-1}, U_n}}})$ where  ${O_{\bar{x}_{U_{n-1}, U_n}}}\in \mathcal{O}_{U_n}$ and ${\bar{x}_{U_{n-1}, U_n}}\in \mathbb{S}_{\mathcal{A}_{U_n},i_{U_n},i_{U_n}}$.

($2_n$) There is a play part $$(H_0,\sigma(H_0),H_1,\sigma(H_0,\sigma(H_0),H_1),H_2,\ldots, H_n,\sigma(\ldots,H_n))$$ in the right-down version of $BM(X)$ such that $$U_0\supseteq H_0\supseteq \sigma(H_0)\supseteq t(U_0)\supseteq\ldots \supseteq U_n\supseteq H_n\supseteq \sigma(\ldots,H_n)\supseteq t(U_{n-1},U_n).$$

($3_n$) There is a part ${\gamma_0}\sim {\theta_{n-1}}$ of $x_{U_{n-1},U_n}$ such that  $H_0=B^{\theta_0}_{\gamma_0},H_1=B^{\theta_1}_{\gamma_1}, \ldots, H_{n-1}=B^{\theta_{n-1}}_{\gamma_{n-1}}.$ As in Definition \ref{TAn}, while extending $x_{U_{n-1},U_n}$ to $x'_{U_{n-1},U_n}$ we get the part ${\gamma_0}\sim {\theta_{n}}$ of $x'_{U_{n-1},U_n}$ from ${\gamma_0}\sim {\theta_{n-1}}$ 
by adding $(\gamma_n, \theta_{n})$ to the end of ${\gamma_0}\sim {\theta_{n-1}}$. 
So, we can set   $H_n=B^{\theta_n}_{\gamma_{n}}\in [pur(\mathcal{B})]$.  

 (Note that if $n=1$, then  
$\gamma_0\sim \theta_{n-1}={\gamma^0_0}\sim {\theta^0_{n-1}}$. 
In other words, ${\gamma_0}\sim {\theta_{n-1}}$ 
is the first part of $x_{U_{n-1},U_n}=x_{U_{0},U_1}$. If $n>1$, then  $\gamma_0\sim \theta_{n-1}$ does not have to be first part of $x_{U_{n-1},U_n}$. So, we just write 
${\gamma_0}\sim {\theta_{n-1}}$ instead of ${\gamma^k_0}\sim {\theta^k_{n-1}}$.)

For $n+1$; suppose that for (n+2)-th round, \1 makes   $U_{n+1}$ such that $U_{n+1}\subseteq t(U_{n-1},U_n)$.

($1_{n+1}$) $\bar{x}_{U_{n-1},U_n}\in \mathcal{F}_{U_n,U_{n+1}}$, because of  induction hypothesis ($1_n$) and definition of $\mathcal{F}_{U_n,U_{n+1}}$. So, $\mathcal{F}_{U_n,U_{n+1}}\neq \emptyset$.  Thus,  by applying the   process of Case 2 and by using  Claim, we get  
 $\bar{x}_{U_{n},U_{n+1}}\in \mathbb{S}_{\mathcal{A}_{U_{n+1}}, i_{U_{n+1}}, i_{U_{n+1}}}$ and $x_{U_{n},U_{n+1}}$
  produces a 
 $W(x_{U_{n},U_{n+1}},\widehat{O_{\bar{x}_{U_{n},U_{n+1}}}})\in pur(\mathcal{B})$ 
 	with $\sigma$ where $O_{\bar{x}_{{U_{n},U_{n+1}}}}\in \mathcal{O}_{U_{n+1}}\subseteq \mathcal{A}_{U_{n+1}}.$ Therefore,
 \2 makes the move $t(U_{n},U_{n+1})=W(x_{U_{n},U_{n+1}},\widehat{O_{\bar{x}_{U_{n},U_{n+1}}}}).$
 Hence, 
we have ($1_{n+1}$). 

($2_{n+1}$ and $3_{n+1}$)  From   the paragraph   above  we know $\bar{x}_{U_{n-1},U_n}\in \mathcal{F}_{U_n,U_{n+1}}$ and from the process of Case 2, we know that $x_{U_{n},U_{n+1}}=\ldots *x'_{U_{n-1},U_n}*\ldots$ . From Definition \ref{TAn} and ($3_n$), the part  ${\gamma_0}\sim {\theta_{n-1}}$ of ${x}_{U_{n-1},U_n}$ will be ${\gamma_0}\sim {\theta_{n}}$ and ${\gamma_0}\sim {\theta_{n}}$ will be a part of $x'_{U_{n-1},U_n}$. So, ${\gamma_0}\sim {\theta_{n}}$ will be a part of  $x_{U_{n},U_{n+1}}$. 
Because $x_{U_{n},U_{n+1}}$ produces a $W(x_{U_{n},U_{n+1}},\widehat{O_{\bar{x}_{U_{n},U_{n+1}}}})$, As in Definition \ref{TAn}, while the process of producing $x_{U_{n},U_{n+1}}$ will extends to $x'_{U_{n},U_{n+1}}$. Thus, we will get the part ${\gamma_0}\sim {\theta_{n+1}}$ of $x'_{U_{n},U_{n+1}}$  from ${\gamma_0}\sim {\theta_{n}}$ 
by adding $(\gamma_{n+1}, \theta_{n+1})$ to the end of ${\gamma_0}\sim {\theta_{n}}$. 
Now, we can set $H_{n+1}=B^{\theta_{n+1}}_{\gamma_{n+1}}$. 
Thus, with $(2_n)$ and $(3_n)$ and  by using Definition \ref{TAn},  we get 
($2_{n+1}$), i.e., there is a play part $$(H_0,\sigma(H_0),H_1,\sigma(H_0,\sigma(H_0),H_1),H_2,\ldots, H_{n+1},\sigma(\ldots,H_{n+1}))$$ in the right-down version of $BM(X)$ such that $U_0\supseteq H_0\supseteq \sigma(H_0)\supseteq t(U_0)\supseteq\ldots \supseteq U_n\supseteq H_n\supseteq \sigma(\ldots,H_n)\supseteq t(U_{n-1},U_n)\supseteq U_{n+1}\supseteq H_{n+1}\supseteq \sigma(\ldots,H_{n+1})\supseteq t(U_{n},U_{n+1}).$
 And we  also get ($3_{n+1}$).

Consequently, when Play (\ref{2})  ends, from the induction, we can say that there exists a play 
$$H_0,\sigma(H_0),H_1,\sigma(H_0,\sigma(H_0),H_1),H_2,\sigma(\ldots,H_{2}), \ldots$$in the right-down version of $BM(X)$. Because $\sigma$ is winning strategy of \2,  $\bigcap_{i\in \omega}H_i\neq \emptyset$. Also, from the induction, this play entwined with Play (\ref{2}) such that 

$$U_0\supseteq H_0\supseteq \sigma(H_0)\supseteq t(U_0)\supseteq\ldots \supseteq U_n\supseteq H_n\supseteq \sigma(\ldots,H_n)\supseteq t(U_{n-1},U_n)\supseteq \ldots .$$

Hence, $\emptyset\neq \bigcap_{i\in \omega}H_i=\bigcap_{i\in \omega}U_i$. Therefore, $t$ is a 2-tactic for \2 in the right-down version of $BM(X)$. 

Hence, from Lemma \ref{Lemma1}, \2 has a 2-tactic in $BM(X)$.

  \end{proof}

 Clearly, Theorem \ref{Teorem winning ten 2-tactige ana sonuc open-finite} includes Theorem 3.8 in \cite{soyarslanNoeth}. Since Theorem 3.8 in \cite{soyarslanNoeth} contains Galvin's theorem (Corollary 3.10 in \cite{soyarslanNoeth}), our main result, Theorem \ref{Teorem winning ten 2-tactige ana sonuc open-finite}, also encompasses Galvin's theorem.

\bibliographystyle{plain}

\end{document}